\documentclass[11pt,a4paper]{amsart}

\usepackage{amsmath,amssymb,amsfonts}
\newcommand{\PMP}{Pontryagin's Maximum Principle}
\newcommand{\R}[1]{\mathbb{R}^{#1}}
\newcommand{\N}[1]{\mathbb{N}^{#1}}
\newcommand{\U}[1]{\mathcal{U}}
\newcommand{\set}[1]{\ensuremath{\mathcal{#1}}}
\newcommand{\spanby}[1]{\ensuremath{\operatorname{span} \left\{ {#1} \right\}}}
\newcommand{\innerprod}[2]{\ensuremath{\left\langle {#1} , {#2} \right\rangle}}
\newcommand{\lieprod}[2]{\ensuremath{\left[ {#1} , {#2} \right]}}
\newcommand{\ad}[3]{\ensuremath{\operatorname{ad}_{#1}^{#2}{#3}}}
\newcommand{\sign}[1]{\ensuremath{\operatorname{sign} \left( {#1} \right)}}
\newcommand{\mes}[1]{\ensuremath{\operatorname{Mes} \left( {#1} \right)}}

\usepackage{amsthm}
\newtheorem{teo}{Theorem}[section]
\newtheorem{lem}[teo]{Lemma}

\newtheorem{cor}[teo]{Corolary}
\newtheorem{defin}[teo]{Definition}

\newtheorem{remark}[teo]{Remark}

\usepackage{setspace}
\onehalfspacing

\begin{document}
\title{Fuller Phenomenon in multiple input control systems}
\author{Eduardo Oda}
\address{IMEUSP - Instituto de Matem\'{a}tica e Estat\'{\i}stica da Universidade de S\~{a}o Paulo}
\email{oda@ime.usp.br}
\author{Pedro Aladar Tonelli}
\address{IMEUSP - Instituto de Matem\'{a}tica e Estat\'{\i}stica da Universidade de S\~{a}o Paulo}
\email{tonelli@ime.usp.br}

\begin{abstract}
  Many optimal control problems exhibit a peculiar behavior that is not completely understood, the Fuller Phenomenon. In a naive way, this phenomenon can be described as the accumulation of discontinuities in the control function.

  In this paper extensions to multiple input control systems of classic results on the detection of this behavior are given. It is also given a necessary condition to an arc be singular. This condition gives a potentially new direction of $p^\bot$ which is used to extend the First Pontryagin Cone, improving the geometric comprehension of the problem.

  These techniques are applied to control systems derived from Hamiltonian systems, and sufficient conditions for existence of the Fuller Phenomenon in a subfamily of these systems are given.
\end{abstract}

\maketitle

A common matter on control systems is the optimization of a given criteria. The search for optimal trajectories are mainly supported by the Pontryagin Maximum Principle which often leads to a discontinuous control. Actually it is the general behavior of affine systems.

However, in 1963 Anthony T. Fuller \cite{FULLER:1963} has presented an optimal control problem with accumulation of discontinuities. Although it were unexpected at that time, Ivan A. K. Kupka, in 1990 \cite{KUPKA:1990}, proved that this is not an exceptional behavior.

Due to Kupka, this accumulation of discontinuities is known as {\em Fuller Phenomenon}. This phenomenon is challenging in understand the optimal trajectories and it has been studied by several authors, but the are still many questions to be answered. A particular and interesting case is the junction of singular and non singular arcs of multi-input systems.

In this paper extensions to multiple input control systems of classic results on the detection of this behavior are given. It is also given a necessary condition to an arc be singular. This condition gives a potentially new direction of $p^\bot$ which is used to extend the First Pontryagin Cone, improving the geometric comprehension of the problem. These techniques are applied to control systems derived from Hamiltonian systems, and sufficient conditions for existence of the Fuller Phenomenon in a subfamily of these systems are given.

\section{Introduction}
  Consider the following optimal affine control problem (P):
  \begin{align*}
    \mbox{minimize } & \int_{0}^{T_f(u)} f_0(x)+\sum_{i=0}^{m}g_{0i}(x)u_i \; dt \\
    \mbox{subject to } & \begin{cases}
        \dot{x} = f(x)+\sum_{i=0}^{m}g_i(x)u_i \\
        u=(u_1,\dots,u_m):[0,T_f(u)]\rightarrow\R{m} \mbox{ such that }\\
        |u_i(t)|\leq K(t),\; \forall t\in [0,T_f(u)],\; i=1, \dots, m \\
        x(0) = A \\
        x(T_f(u)) = B
      \end{cases}
  \end{align*}
  where:
  \begin{enumerate}
    \item $x=(x_1,\dots,x_n)\in\R{n}$ and $u=(u_1,\dots,u_m)\in\R{m}$
    \item $f$, $g_i$, $i=1, \dots, m$, are analytic vector fields in $\R{n}$
    \item $f_0$, $g_{0i}$, $i=1, \dots, m$, are analytic maps from $\R{n}$ to $\R{}$
    \item $K$ is analytic and strictly positive
    \item $u_i \in \mes{\R{}}$, $i=1, \dots, m$
  \end{enumerate}
  
  From \PMP{}, if one defines the Hamiltonian function:
  \begin{align*}
    H_\lambda:T^*\R{n} \times \R{m} &\longrightarrow \R{} \\
    (x,p,u) &\longmapsto \innerprod{p}{f(x)+\sum_{i=0}^{m}g_i(x)u_i} - \lambda \left( f_0(x)+\sum_{i=0}^{m}g_{0i}(x)u_i \right)
  \end{align*}
  where $(x,p) \in T_x\R{n}$ and $\lambda \in \{0,1\}$, then each optimal trajectory $(\bar{x},\bar{u}):[0,\bar{T}] \rightarrow \R{n} \times \R{m}$ has a lift to the cotangent space such that $H_\lambda(\bar{x},\bar{p},\bar{u})=0$ and
  \begin{equation*}
    \mbox{(Adj) }
      \begin{cases}
      \frac{d\bar{x}}{dt}(t)=\frac{\partial H_\lambda}{\partial p}(\bar{x}(t),\bar{p}(t),\bar{u}(t)) \\
      \frac{d\bar{p}}{dt}(t)=-\frac{\partial H_\lambda}{\partial x}(\bar{x}(t),\bar{p}(t),\bar{u}(t)) \\
      H_\lambda(\bar{x}(t),\bar{p}(t),\bar{u}(t))=\sup\left\{ H_\lambda(\bar{x}(t),\bar{p}(t),v) | v \in U \right\}
      \end{cases}       
  \end{equation*} 
  for almost all $t \in [0,\bar{T}]$ and $\lambda \in \{0,1\}$. Also $(\lambda,\bar{p}(t))\neq 0$, for almost all $t\in [0,\bar{T}]$. The solutions of (Adj) are called extremals and might not be an optimal solution of the original problem. The new variable $p$ is known as the \emph{adjoint variable}.

  Setting
  \begin{align*}
    \bar{f}&=(f_0,f) & \bar{g}_i&=(g_{0i},g_i) \\
    \bar{x}&=(x_0,x) & \bar{p}&=(\lambda,p)
  \end{align*}
  where $x_0$ satisfies:
  \begin{equation*}
    \dot{x}_0=f_0(x)+\sum_{i=0}^{m}g_{0i}(x)u_i,
  \end{equation*}
  the Hamiltonian function become $H_\lambda=\innerprod{\bar{p}}{\bar{f}} + \sum_{i=i}^{m} \innerprod{\bar{p}}{\bar{g}_i}u_i$ and one has:
  \begin{equation} \label{e:adj_simples}
    \begin{aligned}
      \dot{\bar{x}}&=\frac{\partial H_\lambda}{\partial \bar{p}} = \bar{f}+\sum_{i=0}^{m}\bar{g}_i(x)u_i\\
      \dot{\bar{p}}&=-\frac{\partial H_\lambda}{\partial \bar{x}} = -\bar{p}\frac{\partial f}{\partial x} - \sum_{i=1}^{m} \bar{p}\frac{\partial g_i}{\partial x}u_i.
    \end{aligned}
  \end{equation}
  
  Henceforth, the simplified notation $f, g_i, x$ and $p$ stands for $\bar{f}, \bar{g}_i, \bar{x}$, respectively. Note that the dimension of the new problem is $n+1$.
  
  Since the Hamiltonian $H_\lambda$ is linear in $u$, one has from \PMP{} that $u_i(t)=\sign{\innerprod{p(t)}{g_i(x(t))}} K(t)$ on the intervals where $\innerprod{p(t)}{g_i(x(t))}$ is non zero almost everywhere. These are known as the \emph{nonsingular intervals}. In this case the control is said nonsingular on these intervals and the trajectory is called nonsingular arc on these intervals.
    
  Analogously, the intervals where $\innerprod{p(t)}{g_i(x(t))}$ vanishes almost everywhere are known as \emph{singular intervals} and the controls are called singular on these intervals and the trajectory is called singular arc on these intervals. Along a singular interval the control can not be designed by \PMP{}, but one could consider the time derivatives of $\innerprod{p(t)}{g_i(x(t))}$.

  Indeed, in a singular interval, the time derivatives of the column vector $\left[\innerprod{p}{g_i}\right]_{i=1,\dots,m}$ could be evaluated until a relation that depends on $u$ explicitly is obtained. In other words, for $l \in \N{}$,  one has the $m \times m$ matrix:
  \begin{equation*}
    B_l = \frac{\partial}{\partial u} \left( \frac{d^l}{dt^l} \left[\innerprod{p}{g_i}\right]_{i=1,\dots,m} \right).
  \end{equation*}
  
  Consider the first one that is not identically zero, say the $k$-th derivative $B_k$. Then one has:
  \begin{equation*}
    0 = \frac{d^k}{dt^k} \left[\innerprod{p}{g_i}\right]_{i=1,\dots,m} = A_k(x,p) + B_k(x,p)u.
  \end{equation*}
  
  If $B_{k}$ is nonsingular, all the controls can be evaluated. Otherwise, it is necessary to find some controls, reducing the problem and restarting the procedure. It's now clear the central role of the functions $\innerprod{p(t)}{g_i(x(t))}$ and its derivatives on the design of optimal controls.
  
  The number $q=k/2$ is called \emph{problem order} or \emph{intrinsic order}. Note that even if $B_k$ is not identically zero it can became singular, or even identically zero, along an specific extremal. This fact lead us to another concept of order, know as \emph{arc order} or \emph{local order}. This another concept will not be used in this work and it is better discussed on \cite{LEWIS:1980} and \cite{ODA:2008}.
  
  In 1967 Robbins \cite{ROBBINS:1967} has proved that the intrinsic order is a integer (i.e., $k$ is even) and that the matrix $(-1)^{q}B_{2q}$ is negative semidefinite. This is a extension of Legendre-Clebsch criteria and know as Generalized Legendre-Clebsch Condition (GLC) \footnote{
    Some authors, like \cite{LEWIS:1980}, say that GLC Condition holds for the problem order, but in \cite{ODA-TONELLI:2013} a counterexample is given for the case of multiple input systems and it is shown that the problem order is a positive integer if the system has only one input.
  }. If the matrix is definite we say that it satisfies the Generalized Legendre-Clebsch Strict Condition (GLCS).
  
  To evaluate the problem order a simple known lemma is necessary. It was obtained independently by several authors and its proof is rather simple (see \cite{ODA-TONELLI:2013}).
  
  \begin{lem} \label{l:derivada_de_<p,h>}
    Let $h$ be a smooth vector field. Then along an extremal we have:
    \begin{equation*}
      \frac{d}{dt}\innerprod{p}{h}=\innerprod{p}{\lieprod{f}{h}+\sum_{i=1}^{m}u_i\lieprod{g_i}{h}}.
    \end{equation*}
  \end{lem}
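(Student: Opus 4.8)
The plan is to prove this by a direct computation from the adjoint system \eqref{e:adj_simples}, followed by recognizing the resulting expression as a combination of Lie brackets. I would begin by writing the pairing in coordinates, $\innerprod{p}{h}=\sum_j p_j\,h_j(x(t))$, regarding $p$ as a row covector and $h$ as a vector field evaluated along the trajectory. Since $h$ depends on $t$ only through $x(t)$, the product rule together with the chain rule gives
\[
  \frac{d}{dt}\innerprod{p}{h}
  = \innerprod{\dot{p}}{h} + \innerprod{p}{\frac{\partial h}{\partial x}\dot{x}}.
\]

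Next I would substitute the two equations of \eqref{e:adj_simples} into the two terms. From $\dot{p}=-p\,\frac{\partial f}{\partial x}-\sum_{i=1}^{m}u_i\,p\,\frac{\partial g_i}{\partial x}$ one obtains $\innerprod{\dot{p}}{h}=-p\,\frac{\partial f}{\partial x}h-\sum_{i=1}^{m}u_i\,p\,\frac{\partial g_i}{\partial x}h$, while from $\dot{x}=f+\sum_{i=1}^{m}g_i u_i$ one obtains $\innerprod{p}{\frac{\partial h}{\partial x}\dot{x}}=p\,\frac{\partial h}{\partial x}f+\sum_{i=1}^{m}u_i\,p\,\frac{\partial h}{\partial x}g_i$. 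Collecting the drift contribution and each control contribution separately yields
\[
  \frac{d}{dt}\innerprod{p}{h}
  = p\left(\frac{\partial h}{\partial x}f - \frac{\partial f}{\partial x}h\right)
  + \sum_{i=1}^{m} u_i\, p\left(\frac{\partial h}{\partial x}g_i - \frac{\partial g_i}{\partial x}h\right).
\]
The final step is the identification: for smooth vector fields $X,Y$ the Lie bracket is $\lieprod{X}{Y}=\frac{\partial Y}{\partial x}X-\frac{\partial X}{\partial x}Y$, so the parenthesized drift term is exactly $\lieprod{f}{h}$ and each parenthesized control term is exactly $\lieprod{g_i}{h}$. Substituting these identifications and factoring out the covector $p$ gives the claimed formula.

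I do not expect a genuine analytic obstacle here, since the statement is a pointwise identity valid for almost every $t$ (precisely where the extremal satisfies \eqref{e:adj_simples}), and its whole content is the algebraic regrouping above. The only point that requires care is the bookkeeping of the matrix--vector conventions: $p$ must be treated consistently as a row covector, the Jacobians $\frac{\partial f}{\partial x}$, $\frac{\partial g_i}{\partial x}$, $\frac{\partial h}{\partial x}$ as $(n+1)\times(n+1)$ matrices acting on the right of $p$ and on the left of the vector fields, and the sign convention in the definition of $\lieprod{\cdot}{\cdot}$ must match the one adopted elsewhere in the paper, so that the drift field $f$ and each input field $g_i$ end up in the correct slot of the bracket. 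Once this is fixed, the computation closes immediately.
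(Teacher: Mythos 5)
Your proof is correct, and it is precisely the ``rather simple'' direct computation that the paper itself omits and delegates to the cited reference \cite{ODA-TONELLI:2013}: differentiate $\innerprod{p}{h(x(t))}$ by the product and chain rules, substitute the adjoint system \eqref{e:adj_simples}, and regroup the Jacobian terms into the brackets $\lieprod{f}{h}$ and $\lieprod{g_i}{h}$. Your closing caveats are also the right ones --- the identity holds for almost every $t$ (wherever \eqref{e:adj_simples} holds), the computation takes place in the augmented $(n+1)$-dimensional system with $p$ acting as a row covector, and the sign convention $\lieprod{X}{Y}=\frac{\partial Y}{\partial x}X-\frac{\partial X}{\partial x}Y$ is exactly what makes the drift and control fields land in the first slot of each bracket, consistent with the paper's later use of $\ad{f}{k}{g_i}$.
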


\section{Some useful definitions}
  In this paper the points of interest are those at the border of singular and nonsingular intervals. These time instants are called junctions of singular and nonsingular arcs.

  There are more three definitions that will be used in the next section:
  
  \begin{defin} [Piecewise analytic]
    A real valued function $f:\R{}\rightarrow\R{}$ is piecewise analytic on an interval $(a,b)$ if for each $t_c\in (a,b)$ there exist $t_1\in(a,t_c)$ and $t_2\in(t_c,b)$ shush that $f$ is analytic on $(t_1,t_c)$ and $(t_c,t_2)$. A map $g:\R{}\rightarrow\R{n}$ is 
    piecewise analytic on $(a,b)$ if its components are piecewise analytic.
  \end{defin}
  
  \begin{defin} [Analytic junction]
    A junction is said to be analytic if the optimal control is piecewise analytic in a neighborhood of the junction.
  \end{defin}

  \begin{defin} [Analytic junction of order $q$] \label{d:juncao_analitiva_ordem_q}
    Given a $q$-order problem and a trajectory with a junction at $t_c$, this junction is said to be analytic of order $q$ if it is analytic and the matrix $B_{2q}$ in nonsingular in a neighborhood of the junction.
  \end{defin}

  For the following sections we will denote $A_{2q}$ e $B_{2q}$ by $A$ and $B$, respectively.
  
\section{The junction order parity in multiple input control systems}
  In 1971, McDanell and Powers \cite{MCDANELL-POWERS:1971} have proved a theorem relating the junction order with the control derivatives. This theorem will be extended to the multiple input case.
  
  To avoid the confusion about $B$ been null along the extremal, it will be supposed that $B$ is nonsingular along the trajectory (see definition \ref{d:juncao_analitiva_ordem_q}).
    
  \begin{teo} \label{t:q+r impar}
    Let $t_c$ be an analytic junction of order $q$ in a singular arc such that $\lVert u_i \rVert < K(t)$, $i=1,\dots,m$. If $u^{(r)}$, $r \geq 0$, is the lowest order derivative of the control which is discontinuous at $t_c$, then $q+r$ is odd.
  \end{teo}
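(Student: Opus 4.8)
The plan is to adapt the argument of McDanell and Powers by working directly with the vector switching function $\Phi(t):=\left[\innerprod{p(t)}{g_i(x(t))}\right]_{i=1,\dots,m}$, which vanishes identically along the singular arc. Because $t_c$ is an analytic junction, $\Phi$ is piecewise analytic near $t_c$; writing $\Phi^{-}$ and $\Phi^{+}$ for its analytic restrictions to the singular and to the nonsingular side, the whole question reduces to measuring the order of vanishing of $\Phi^{+}$ at $t_c$ and extracting a parity constraint from the sign that $\Phi^{+}$ is forced to carry.

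First I would show that $\frac{d^{l}}{dt^{l}}\Phi(t_c)=0$ for every $l\le 2q-1$. By repeated application of Lemma \ref{l:derivada_de_<p,h>}, each of these derivatives is, by the very definition of the problem order $q$, a function of $x$ and $p$ only -- the control does not enter explicitly before order $2q$ -- and is therefore continuous in $t$ along the extremal, being built from analytic Lie brackets evaluated at the continuous pair $(x,p)$. Since $\Phi^{-}\equiv 0$, all of its derivatives vanish on the singular side, so by continuity their one-sided limits at $t_c$ are zero as well.

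Next I would pin down the first nonvanishing derivative. At order $2q$ one has the explicit relation $\frac{d^{2q}}{dt^{2q}}\Phi=A+Bu$ with $B$ nonsingular at the junction (Definition \ref{d:juncao_analitiva_ordem_q}); on the singular side $A+Bu^{-}=0$, so across $t_c$ the derivatives of $A+Bu$ have their first nonzero jump exactly at order $r$, equal to $B(t_c)\,[u^{(r)}]$, where $[u^{(r)}]$ denotes the jump of $u^{(r)}$. This follows from a Leibniz expansion together with the fact that $x$ and $p$ are one degree smoother than $u$, so that every lower-order term is continuous across $t_c$ and the only surviving contribution is $B(t_c)[u^{(r)}]$. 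As $B(t_c)$ is invertible and $[u^{(r)}]\neq 0$, the vector $\Phi^{+}$ vanishes to order exactly $2q+r$, with leading coefficient a nonzero multiple of $B(t_c)[u^{(r)}]$.

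The parity statement is where I expect the real difficulty. The maximization condition in (Adj) forces $\Phi^{+}$, on $t>t_c$, to point into the cone selecting the observed control, which prescribes the admissible sign of its leading term $B(t_c)[u^{(r)}]\,(t-t_c)^{2q+r}$; confronting this with the Generalized Legendre-Clebsch condition, which fixes the definiteness type of $(-1)^{q}B$, is what introduces the dependence on $q$ and ultimately forces $q+r$ to be odd. In the single-input case these are scalar sign comparisons; the genuinely new obstacle in the multiple-input setting is that $B$ is now an $m\times m$ matrix and $[u^{(r)}]$ a vector, so the sign bookkeeping must be replaced by testing the jump direction $[u^{(r)}]$ against the quadratic form $(-1)^{q}B$. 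The hypothesis $\lVert u_i\rVert<K(t)$ enters precisely here: it guarantees that the discontinuity is produced by the singular--nonsingular matching itself rather than by saturation of the constraint, so that $A+Bu=0$ governs the control on the singular side and the jump $[u^{(r)}]$ is the true carrier of the parity. Showing that the compatibility of $[u^{(r)}]$ with both the maximum condition and the definiteness of $(-1)^{q}B$ fails whenever $q+r$ is even is the crux of the extension.
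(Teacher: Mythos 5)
Your setup is sound as far as it goes, and it does track the paper's computation: the vanishing of $\phi^{(l)}(t_c)$ for $l\le 2q+r-1$, and the identification of the leading Taylor coefficient of $\phi(t_c+\epsilon)$ as $\frac{\epsilon^{k}}{k!}B(t_c)\bigl(u_n^{(r)}(t_c)-u_s^{(r)}(t_c)\bigr)$ with $k=2q+r$, is exactly equation \eqref{e:fuller_taylor_2}. But the proof stops precisely where the theorem lives. You write that confronting the jump $[u^{(r)}]$ with the maximum condition and with the definiteness of $(-1)^qB$ ``is the crux of the extension'' --- and then you do not carry it out. The obstacle you yourself name is real and is not resolved by your sketch: \PMP{} only gives \emph{componentwise} sign information, namely $\sign{\phi_i(t_c+\epsilon)}=\sigma_i$ and $u_{n,i}=\sigma_i K$, whereas the conclusion requires a statement about the quadratic form $\innerprod{\cdot}{B(t_c)\,\cdot}$ evaluated at a single vector. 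Pairing $\phi(t_c+\epsilon)$ directly with $[u^{(r)}]$ does not work, because for $r\ge 1$ nothing controls the signs of the components $[u^{(r)}]_i=\sigma_i K^{(r)}(t_c)-u_{s,i}^{(r)}(t_c)$; so the ``sign bookkeeping'' you gesture at has no vector to test against.

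The missing idea, which is the actual content of the paper's multi-input extension, is to test against $v=\sigma K(t_c-\epsilon)-u(t_c-\epsilon)$ evaluated \emph{inside the singular arc}. The hypothesis $\lVert u_i\rVert<K(t)$ --- whose role you misattribute: it is not needed to get $A+Bu_s=0$ on the singular side, since that follows automatically from $\phi\equiv 0$ there --- serves exactly to make every component $v_i$ nonzero with sign $\sigma_i$, whence $\innerprod{v}{\phi(t_c+\epsilon)}>0$ strictly. Then the Taylor expansion \eqref{e:fuller_taylor_3}, using $u_n^{(i)}(t_c)=\sigma K^{(i)}(t_c)$ for $i=0,\dots,r$, shows that $v=\frac{(-1)^r\epsilon^r}{r!}\bigl(u_n^{(r)}(t_c)-u_s^{(r)}(t_c)\bigr)+o(\epsilon^r)$, i.e., $v$ is (up to a positive scalar and the factor $(-1)^r$) the very jump appearing in your leading coefficient. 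Substituting back turns the leading term of $\phi(t_c+\epsilon)$ into $\frac{(-1)^r r!\,\epsilon^{2q}}{k!}B(t_c)v$, so the positivity becomes $0<(-1)^r\innerprod{v}{B(t_c)v}$ --- the same vector on both sides of the form. Multiplying by $\innerprod{v}{(-1)^qB(t_c)v}<0$ (GLCS, legitimate since $v\neq 0$) yields $(-1)^{q+r}\innerprod{v}{B(t_c)v}^2<0$, hence $q+r$ odd. Without this substitution of $v$ for the jump, your outline identifies the right objects but cannot close the parity argument.
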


  \begin{proof}
    Consider the function $\phi(t)=\left( \phi_1(t), \dots, \phi_m(t) \right)$, where
    \begin{equation*}
      \phi_i(t)=\innerprod{p(t)}{g_i(x(t))}     
    \end{equation*}
    with $i=1,\dots,m$. Clearly, $\phi$ is a $\set{C}^{2q+r-1}$ function at $t_c$ and $\phi$ do not vanish in a neighborhood of $t_c$ intercepted by the interior of the non singular arc.
      
    There is an $\epsilon \neq 0$ such that $t_c+\epsilon$ is in the intersection of this neighborhood and the interior of the non singular arc, $t_c-\epsilon$ is in the singular arc and $B$ is negative definite on $(t_c-\epsilon,t_c+\epsilon)$, i.e., we have GLCS in this interval. From now on we will consider this neighborhood.
      
    Let's denote by $u_s$ and $u_n$ the restrictions of $u$ to the singular and non singular arcs, respectively, and
    \begin{align*} 
      u_s^{(i)}(t_c) &= \lim_{\epsilon \rightarrow 0} u^{(i)}(t_c - \epsilon) \\
      u_n^{(i)}(t_c) &= \lim_{\epsilon \rightarrow 0} u^{(i)}(t_c + \epsilon).
    \end{align*}

    Denoting $2q+r$ by $k$, since $\phi^{(i)}$ is continuous at $t_c$ for all $0 \leq i \leq k-1$ and $\phi\equiv 0$ along the singular interval, the first non null portion of the Taylor formulae of $\phi$ around $t_c$ is the one related to $\phi^{(k)}$:
    \begin{equation*} 
      \phi(t_c+\epsilon)=\frac{\epsilon^k}{k!}\phi^{(k)}(t_c) + o(\epsilon^{k}).
    \end{equation*}
      
    But note that:
    \begin{equation*} 
      \phi^{(k)}=\frac{d^r\phi^{2q}}{dt^r}=\frac{d^r}{dt^r}\left( A + B u \right),
    \end{equation*}
    then:
    \begin{equation} \label{e:fuller_taylor_1}
      \phi(t_c+\epsilon)=\frac{\epsilon^k}{k!}\left( A^{(r)}(t_c) +\sum_{i=0}^{r} \binom{r}{i} B^{(r-i)}(t_c)u_n^{(i)}(t_c) \right) + o(\epsilon^{k}).
    \end{equation}
      
    Along the singular interval:
    \begin{equation*} 
      0=\phi^{(2q)}=A+B u_s \Rightarrow A=-B u_s,
    \end{equation*}
    therefore
    \begin{equation*} 
      A^{(r)}(t_c)=-\sum_{i=0}^{r} \binom{r}{i} B^{(r-i)}(t_c)u_s^{(i)}.
    \end{equation*}

    Using this identity in the equation \eqref{e:fuller_taylor_1}:
    \begin{equation*}
      \phi(t_c+\epsilon)=\frac{\epsilon^k}{k!}\left( \sum_{i=0}^{r} \binom{r}{i} B^{(r-i)}(t_c)\left(u_n^{(i)}(t_c) - u_s^{(i)}(t_c) \right) \right) + o(\epsilon^{k}).
    \end{equation*}

    Since $u_n^{(i)}(t_c)=u_s^{(i)}(t_c)$ for all $0 \leq i \leq r-1$, one finally gets:
    \begin{equation} \label{e:fuller_taylor_2}
      \phi(t_c+\epsilon)=\frac{\epsilon^k}{k!}B(t_c)\left(u_n^{(r)}(t_c) - u_s^{(r)}(t_c)\right) + o(\epsilon^{k}).
    \end{equation}
      
    Consider the vector $\sigma=\left( \sign{\phi_1(t_c+\epsilon)} , \dots , \sign{\phi_m(t_c+\epsilon)} \right)$. Since the junction is analytic, $\sigma$ is constant in a neighborhood of the junction intercepted by the nonsingular interval. And it is known from \PMP{} that on the nonsingular interval $u_n(t)=\sigma K(t)$, therefore $u_n^{(i)}(t_c)=\sigma K^{(i)}(t_c)$, $i=0,\ldots,r$. Consider now the following expansion:
    \begin{equation} \label{e:fuller_taylor_3}
      \begin{split}
        \sigma K(t_c-\epsilon) - u(t_c-\epsilon) &= \sum_{i=0}^{r}\frac{\left(-\epsilon\right)^i}{i!} \left( \sigma K^{(i)}(t_c)-u_s^{(i)}(t_c) \right) + o(\epsilon^{r}) \\
        &= \sum_{i=0}^{r}\frac{\left(-\epsilon\right)^i}{i!} \left( u_n^{(i)}(t_c)-u_s^{(i)}(t_c) \right) + o(\epsilon^{r}) \\
        &= \frac{\left(-1\right)^r\epsilon^r}{r!} \left( u_n^{(r)}(t_c)-u_s^{(r)}(t_c) \right) + o(\epsilon^{r}).
      \end{split}
    \end{equation}
    from where one gets:
    \begin{equation*} 
      u_n^{(r)}(t_c)-u_s^{(r)}(t_c) = \frac{\left(-1\right)^r r!}{\epsilon^r} \left( \sigma K(t_c-\epsilon) - u(t_c-\epsilon) \right) + o(\epsilon^{r}).
    \end{equation*}

    Using this relation in the equation \eqref{e:fuller_taylor_2}, one has:
    \begin{equation*}
      \phi(t_c+\epsilon)=\frac{\left(-1\right)^r r! \epsilon^{2q}}{k!}B(t_c)\left( \sigma K(t_c-\epsilon) - u(t_c-\epsilon) \right) + o(\epsilon^{k}).
    \end{equation*}

    Since $\sigma$ was defined in the way that $\innerprod{\phi(t_c+\epsilon)}{\sigma K(t_c-\epsilon) - u(t_c-\epsilon)}$ is positive and denoting $v=\sigma K(t_c-\epsilon) - u(t_c-\epsilon)$, one has:
    \begin{equation*}
      0 < \innerprod{v}{\phi(t_c+\epsilon)} = \innerprod{v}{\frac{\left(-1\right)^r r! \epsilon^{2q}}{k!}B(t_c)v + o(\epsilon^{k})},
    \end{equation*}
    from what:
    \begin{equation} \label{e:fuller_inequacao_1}
      0 < (-1)^r \innerprod{v}{B(t_c)v}.
    \end{equation}

    The hypothesis $\lVert u_i \rVert < K(t)$, $i=1,\dots,m$, implies that each component of $v$ is non zero. Then, since $(-1)^qB(t_c)$ is negative definite, one can conclude that $\innerprod{v}{(-1)^qB(t_c)v} < 0$. Thus, by multiplying both sides of the inequation \eqref{e:fuller_inequacao_1} by this quantity, one gets:
    \begin{equation*}
      0 > (-1)^r \innerprod{v}{B(t_c)v}.\innerprod{v}{(-1)^qB(t_c)v} = (-1)^{q+r} \innerprod{v}{B(t_c)v}^2
    \end{equation*}  
    Therefore one can finally concludes that $(-1)^{q+r}<0$. Therefore, $q+r$ is odd.
  \end{proof}
    
  \begin{cor} \label{c:fuller_u_nao_analitico_por_partes}
    With the same hypothesis of theorem, if $q$ is even and $A(t_c)+K(t_c)B(t_c)v \neq 0$ for all $v \in \left\{ -1,1 \right\}^m$, then the junction is not analytic.
  \end{cor}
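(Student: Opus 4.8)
The plan is to argue by contradiction: I assume the junction at $t_c$ \emph{is} analytic and show that the hypothesis $A(t_c)+K(t_c)B(t_c)v\neq 0$ for all $v\in\{-1,1\}^m$ must then fail. The whole argument rests on Theorem \ref{t:q+r impar} together with a single continuity observation at the junction, so it is short.

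First I would invoke Theorem \ref{t:q+r impar}. Assuming the control is not analytic across $t_c$, some derivative of $u$ is discontinuous there; let $u^{(r)}$ be the lowest-order such derivative. The theorem then forces $q+r$ to be odd, and since $q$ is even by hypothesis, $r$ must be odd, in particular $r\geq 1$. Consequently every derivative of order strictly below $r$—including $u$ itself, the case $r=0$—is continuous at $t_c$; that is, $u_s(t_c)=u_n(t_c)$. If instead no derivative of $u$ were discontinuous at $t_c$, then $u$ would be \emph{a fortiori} continuous there and the same conclusion would hold, so this degenerate case requires no separate treatment.

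Next I would exploit the two descriptions of the control on either side of the junction, both already recorded in the proof of Theorem \ref{t:q+r impar}. On the nonsingular side one has $u_n(t)=\sigma K(t)$ with $\sigma=(\sign{\phi_1(t_c+\epsilon)},\dots,\sign{\phi_m(t_c+\epsilon)})$; since $\phi$ does not vanish on the interior of the nonsingular arc, $\sigma\in\{-1,1\}^m$, and letting $\epsilon\to 0$ gives $u_n(t_c)=\sigma K(t_c)$. On the singular side the identity $A+Bu_s=0$ holds along the whole singular interval, so by continuity of $A$, $B$ and $u_s$ one has $A(t_c)+B(t_c)u_s(t_c)=0$. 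Combining these with $u_s(t_c)=u_n(t_c)=\sigma K(t_c)$ from the previous step yields
\begin{equation*}
  A(t_c)+K(t_c)B(t_c)\sigma = 0,\qquad \sigma\in\{-1,1\}^m,
\end{equation*}
which directly contradicts the standing hypothesis. Hence the junction cannot be analytic.

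The only point demanding care is the passage from ``$q+r$ odd with $q$ even'' to the continuity of $u$ at $t_c$, i.e.\ verifying that $r\geq 1$ genuinely forces $u_s(t_c)=u_n(t_c)$ and covering the borderline situation in which no derivative jumps at all; this is where the evenness of $q$ is essential, since for odd $q$ one could have $r=0$ and the continuity step would collapse. Everything else is routine: taking one-sided limits and substituting the singular relation $A=-Bu_s$.
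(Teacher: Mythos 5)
Your proof is correct and is essentially the paper's own argument read in the contrapositive: the paper uses the hypothesis together with the singular relation $A(t_c)+B(t_c)u_s(t_c)=0$ and the bang-bang form $u_n(t_c)=\sigma K(t_c)$, $\sigma\in\{-1,1\}^m$, to conclude the control is discontinuous (so $r=0$, and an analytic junction would force $q=q+0$ odd, impossible for $q$ even), whereas you assume analyticity, invoke Theorem \ref{t:q+r impar} with $q$ even to get $r\geq 1$ (hence continuity of $u$ at $t_c$, including the degenerate no-jump case), and then contradict the hypothesis via the same two control descriptions. The ingredients and the key comparison of $u_s(t_c)$ with $u_n(t_c)$ are identical, so this is the same proof in a different logical arrangement.
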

  \begin{proof} 
    It is enough to show that the control is discontinuous, i.e., $r=0$. To do so, note that there is $v \in \left\{ -1,1 \right\}^m$ such that the control on the nonsingular interval is given by $u(t)=vK(t)$. Thus, for this specific $v$, $A(t_c)+B(t_c)u_n(t_c)=A(t_c)+K(t_c)B(t_c)v \neq 0=A(t_c)+B(t_c)u_s(t_c)$. Therefore the control is discontinuous.
  \end{proof}

  \begin{cor} \label{c:fuller_u_nao_analitico_por_partes alpha=0}
    With the same hypothesis of theorem, if $q$ is even and $A \equiv 0$, then the junction is not analytic.
  \end{cor}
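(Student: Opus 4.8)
The plan is to reduce the statement to Corollary~\ref{c:fuller_u_nao_analitico_por_partes}, whose hypothesis I will verify holds automatically once $A\equiv 0$. That corollary asserts that, under the standing hypotheses of Theorem~\ref{t:q+r impar}, if $q$ is even and $A(t_c)+K(t_c)B(t_c)v\neq 0$ for every $v\in\{-1,1\}^m$, then the junction fails to be analytic. Hence it suffices to establish the nonvanishing condition in the present setting.

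Setting $A\equiv 0$, the relevant quantity collapses to $A(t_c)+K(t_c)B(t_c)v=K(t_c)B(t_c)v$, and I would argue that this vector is nonzero for every admissible sign pattern $v$. First, because $t_c$ is an analytic junction of order $q$ (Definition~\ref{d:juncao_analitiva_ordem_q}), the matrix $B=B_{2q}$ is nonsingular in a neighborhood of $t_c$; in particular $B(t_c)$ is invertible. Second, $K$ is analytic and strictly positive, so $K(t_c)>0$. Finally, every $v\in\{-1,1\}^m$ has all entries equal to $\pm 1$ and is therefore a nonzero vector. Combining these three facts, $B(t_c)v\neq 0$ and consequently $K(t_c)B(t_c)v\neq 0$ for all $v\in\{-1,1\}^m$.

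With the hypothesis of Corollary~\ref{c:fuller_u_nao_analitico_por_partes} now in force, applying that corollary directly yields that the junction is not analytic, which is the desired conclusion.

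I expect no serious obstacle here, since the result is a genuine specialization; the only points requiring care are bookkeeping ones, namely invoking the nonsingularity of $B(t_c)$ guaranteed by the order-$q$ analyticity of the junction, and observing that sign vectors are never zero. Alternatively, one could bypass the previous corollary and reproduce its short argument in place: with $A\equiv 0$ the singular relation $0=A+Bu_s$ forces $u_s=-B^{-1}A=0$ along the singular arc, while $u_n(t_c)=K(t_c)v\neq 0$, so the control is discontinuous at $t_c$, giving $r=0$ and $q+r=q$ even, in contradiction with Theorem~\ref{t:q+r impar}.
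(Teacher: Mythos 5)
Your proposal is correct, and your primary route differs from the paper's. The paper proves this corollary directly in two lines: since $A\equiv 0$ and $B$ is nonsingular, the singular relation $0=A+Bu_s$ forces $u_s\equiv 0$ on the singular arc, while on the nonsingular arc the control sits at the boundary of the admissible set, $u_n(t)=vK(t)$ with $v\in\{-1,1\}^m$ and $K>0$; hence the control jumps at $t_c$, so $r=0$ and $q+r=q$ even, contradicting Theorem~\ref{t:q+r impar} if the junction were analytic. You instead observe that $A\equiv 0$ makes the hypothesis of Corollary~\ref{c:fuller_u_nao_analitico_por_partes} automatic: $A(t_c)+K(t_c)B(t_c)v=K(t_c)B(t_c)v\neq 0$ for every sign vector $v$, because $K(t_c)>0$, $B(t_c)$ is invertible (the standing assumption of the section, also built into Definition~\ref{d:juncao_analitiva_ordem_q}), and $v\neq 0$. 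This reduction is clean and exposes the logical structure the paper leaves implicit, namely that the $A\equiv 0$ corollary is literally a special case of the preceding one; the paper's direct argument buys a self-contained, more vivid justification (control interior versus control saturated) without routing through the earlier proof. Your closing alternative argument reproduces the paper's proof essentially verbatim, so you have in fact covered both routes; the only bookkeeping point worth making explicit in either version is that the nonsingularity of $B(t_c)$ is available inside the contradiction hypothesis (one assumes the junction is analytic of order $q$ and then refutes it), which you do cite correctly.
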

  \begin{proof} 
    Indeed, in this case the control is null on the singular interval and at the border of the admissible set on the nonsingular interval, therefore it is discontinuous.
  \end{proof}

  \section{An extension of Fuller's example}
    In 1963, Fuller propose the following example:
    \begin{align*}
      \mbox{minimize } & \int_{0}^{T_f(u)} \frac{x^2}{2} \; dt \\
      \mbox{subject to } & \begin{cases}
          \dot{x} = v \\
          \dot{v} = u \\
          u\in\mes{\R{}}, & |u(t)|\leq 1,\; \forall t\in [0,T_f(u)]\\
          (x,v)(0) = A \\ (x,v)(T_f(u)) = B\\
        \end{cases}
    \end{align*}
    
    The control system of this problem can be obviously interpreted as the dynamics of a point along a straight line with limited acceleration. In the Hamiltonian formalism it can be described with the function $H(x,v) = T(x,v) + P(x)$, where:
    \begin{align*}
      T(x,v)&=\frac{v^2}{2} & P(x)&=-xu \\
    \end{align*}
    and the functional to be optimized is the square of the norm of $x$.
    
    Thereby, the Fuller's example can be extended to a multiple input system with the following generalizations of the kinetic energy and of the potential energy:
    \begin{align*}
      T(x,v)&=\frac{v^\intercal M_1 v}{2} & P(x)&=-u^\intercal M_2 x
    \end{align*}
    where $(x,v)\in\R{n}\times\R{n}$, $u \in \R{n}$,  $M_1$ and $M_2$ are $n\times n$ matrices which are symmetric and constants, $M_1$ is positive definite, $M_2$ is invertible and the functional to be optimized is $\frac{\lVert x \rVert^2}{2}$. The new problem is:
    \begin{align*}
      \mbox{minimize } & \int_{0}^{T_f(u)} \frac{\lVert x \rVert^2}{2} \; dt \\
      \mbox{subject to } & \begin{cases}
	  \dot{x} = M_1 v \\
	  \dot{v} = M_2 u \\
	  u\in\mes{\R{n}}, \\
	  |u_i(t)|\leq 1,& \forall t\in [0,T_f(u)], \;i=1,\dots,n \\
	  (x,v)(0) = A \\ (x,v)(T_f(u)) = B\\
	\end{cases}
    \end{align*}
        
    The Hamiltonian of \PMP{} for this new problem is:
    \begin{equation*}
      H_\lambda(x,p,u)= p_1^\intercal M_1 v + p_2^\intercal M_2 u - \lambda \frac{\lVert x \rVert^2}{2}
    \end{equation*}
    where $(p_1,p_2)\in\R{n}\times\R{n}$. Therefore, $u=\sign{M_2 p_2}$ on the nonsingular intervals and the adjoint equations are:
    \begin{equation*}
      \mbox{(Adj) }
	\begin{cases}
	  \dot{x}=M_1 v &
	  \dot{v}=M_2 u \\
	  \dot{p}_1=x &
	  \dot{p}_2=-M_1 p_1.
	\end{cases}
    \end{equation*}
    
    Note that, standing at the origin, the optimal control is $u\equiv 0$. Therefore, any optimal trajectory that reaches the origin can be extended indefinitely with a singular arc. On the other hand, along a singular interval, $\phi=M_2 p_2\equiv0$, thus:   
    \begin{align*}
      0=\phi^{(1)} &= M_2 \dot{p}_2=-M_2 M_1p_1 \\
      0=\phi^{(2)} &= -M_2 M_1\dot{p}_1=-M_2 M_1 x \\
      0=\phi^{(3)} &= -M_2 M_1\dot{x}=-M_2 M_1 M_1 v \\
      0=\phi^{(4)} &= -M_2 M_1 M_1\dot{v}=-M_2 M_1 M_1 M_2 u.
    \end{align*}
    
    Thus $A=0$ and $B=-M_2 M_1 M_1 M_2$. Note that $B$ is invertible and the problem (and each arc, since it is constant) has order $q=2$. Then, from the corollary \ref{c:fuller_u_nao_analitico_por_partes alpha=0}, one can concludes that the junction can not be analytic. Moreover, note that $(-1)^2B$ is negative definite, which is in accordance with the GLC Condition.
    
    One might notice that $A$ and $B$ been constant was crucial to prove that the control was discontinuous and that the order (both of them) is 2. In a more general situation, this could be much more difficult. In the next session some directions on this matter are given.
    
  \section{Revisiting the Pontryagin Cone}
    In this section the lemma \ref{l:derivada_de_<p,h>} will be used to explicit $A$ and $B$. This evaluations will let clear the role of the distribution generated by vector fields that define the problem.
    
    It will be considered the system with the augmented dimension like the equations \ref{e:adj_simples}, with $H_\lambda=\innerprod{p}{f}+\sum_{i=1}^{m}\innerprod{p}{g_i}u_i$ e $\phi=(\innerprod{p}{g_1},\dots,\innerprod{p}{g_m})$.
    
    The first derivative of $\phi$ has the form:
    \begin{equation*}
      \phi^{(1)} =
        \left[\begin{array}{c}
          \frac{d}{dt}\innerprod{p}{g_1} \\
          \vdots \\
          \frac{d}{dt}\innerprod{p}{g_m} 
        \end{array}\right]
        =
        \left[\begin{array}{c}
          \innerprod{p}{ \ad{f}{}{g_1} + \sum_{i=1}^{m} \lieprod{g_i}{g_1}u_i } \\
          \vdots \\
          \innerprod{p}{ \ad{f}{}{g_m} + \sum_{i=1}^{m} \lieprod{g_i}{g_m}u_i }
        \end{array}\right].
    \end{equation*}
    
    If the first derivative do not depend on $u$, in other words, if $\lieprod{g_i}{g_j}=0$, $\forall i,j$, then the second derivative can be evaluated:
    \begin{equation*}
      \phi^{(2)} =
	\left[\begin{array}{c}
	  \frac{d}{dt}\innerprod{p}{ \ad{f}{}{g_1} } \\
	  \vdots \\
	  \frac{d}{dt}\innerprod{p}{ \ad{f}{}{g_m} }
	\end{array}\right]
	=
	\left[\begin{array}{c}
	  \innerprod{p}{ \ad{f}{2}{g_1} + \sum_{i=1}^{m} \lieprod{g_i}{\ad{f}{}{g_1}}u_i } \\
	  \vdots \\
	  \innerprod{p}{ \ad{f}{2}{g_m} + \sum_{i=1}^{m} \lieprod{g_i}{\ad{f}{}{g_m}}u_i }
	\end{array}\right].
    \end{equation*}
    
    Proceeding in this way until one finds an expression which explicitly depends on $u$ one gets:
    \begin{equation*}
      \phi^{(k)} =
	\left[\begin{array}{c}
	  \innerprod{p}{ \ad{f}{k}{g_1} + \sum_{i=1}^{m} \lieprod{g_i}{\ad{f}{k-1}{g_1}}u_i } \\
	  \vdots \\
	  \innerprod{p}{ \ad{f}{k}{g_m} + \sum_{i=1}^{m} \lieprod{g_i}{\ad{f}{k-1}{g_m}}u_i }
	\end{array}\right].
    \end{equation*}
    
    It is possible to rewrite this expression in the form $\phi^{(k)} = A_k + B_ku$ where:
    \begin{equation*}
      A=\left[\begin{array}{c}
	\innerprod{p}{\ad{f}{k}{g_1}} \\
	\innerprod{p}{\ad{f}{k}{g_2}} \\
	\vdots \\
	\innerprod{p}{\ad{f}{k}{g_m}}
      \end{array}\right]
    \end{equation*}
    \begin{equation*}
      B=\left[\begin{array}{cccc}
	\innerprod{p}{\lieprod{g_1}{\ad{f}{k-1}{g_1}}} & \innerprod{p}{\lieprod{g_2}{\ad{f}{k-1}{g_1}}} & \cdots & \innerprod{p}{\lieprod{g_m}{\ad{f}{k-1}{g_1}}} \\
	\innerprod{p}{\lieprod{g_1}{\ad{f}{k-1}{g_2}}} & \innerprod{p}{\lieprod{g_2}{\ad{f}{k-1}{g_2}}} & \cdots & \innerprod{p}{\lieprod{g_m}{\ad{f}{k-1}{g_2}}} \\
	\vdots & \vdots & \ddots & \vdots \\
	\innerprod{p}{\lieprod{g_1}{\ad{f}{k-1}{g_m}}} & \innerprod{p}{\lieprod{g_2}{\ad{f}{k-1}{g_m}}} & \cdots & \innerprod{p}{\lieprod{g_m}{\ad{f}{k-1}{g_m}}}
      \end{array}\right].
    \end{equation*}
    
    Somewhat hidden in these evaluations is the fact that $\innerprod{p}{\ad{f}{l}{g_i}}=0$ for all $i=1,\dots,m$, $j=1,\dots,k-1$ and for all $t$ in the singular interval, because there are the entries of the vector $A$ . Besides, since it is a singular arc, $\innerprod{p}{g_i}=0$, then it is clear that the distribuition
    \begin{equation*}
      \spanby{\ad{f}{l}{g_i}\;|\; i=1,\dots,m;\; l=0,\dots,k-1}
    \end{equation*}
    is orthogonal to $p$. This is a well known distribuition called the First Pontryagin Cone and it is very important because it gives information about the adjoint variable $p$. But along a singular arc we can give another direction of $p^\bot$.
    
    \begin{lem}
      Along a singular arc $\innerprod{p}{f}=0$.
    \end{lem}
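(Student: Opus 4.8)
The plan is to read the identity straight off two facts already established in the excerpt: the vanishing of the Hamiltonian along any extremal, and the defining property of a singular arc. First I would recall that, by \PMP{}, every optimal trajectory admits a lift $(\bar{x},\bar{p},\bar{u})$ along which the Hamiltonian is identically zero. In the augmented notation this reads
\[
  H_\lambda = \innerprod{p}{f} + \sum_{i=1}^{m}\innerprod{p}{g_i}\,u_i = 0
\]
for almost every $t$ in the interval under consideration.

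Next I would invoke the definition of a singular arc given in the introduction: on such an arc each switching function $\phi_i(t)=\innerprod{p(t)}{g_i(x(t))}$ vanishes almost everywhere. Hence every summand $\innerprod{p}{g_i}\,u_i$ drops out of the expression above, \emph{regardless} of the value of the (possibly undetermined) singular control $u_i$. Substituting $\innerprod{p}{g_i}=0$ into the vanishing-Hamiltonian identity therefore leaves $\innerprod{p}{f}=0$, which is exactly the claim.

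There is essentially no obstacle here; the argument is a one-line substitution. The only point deserving care is that the cancellation of the sum does not require knowing the singular control explicitly: it follows purely from $\innerprod{p}{g_i}=0$, so the conclusion survives even though $u$ cannot be designed by \PMP{} on a singular interval. Finally, since $\innerprod{p}{f}$ is continuous in $t$ along the extremal, the identity extends from ``almost every $t$'' to every $t$ in the closure of the singular interval, completing the proof.
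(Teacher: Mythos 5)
Your proposal is correct and follows exactly the paper's argument: the paper likewise combines $H_\lambda=0$ from \PMP{} with the vanishing of each $\innerprod{p}{g_i}$ on the singular arc, so that $H_\lambda=\innerprod{p}{f}$ there and the claim follows. Your added remarks (that the singular control need not be known, and that continuity extends the identity from almost every $t$ to all $t$) are sound refinements of the same one-line substitution.
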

     
    \begin{proof}
      We know from \PMP{} that along an optimal solution $H_\lambda = 0$. On the other hand, along a singular arc, $H_\lambda = \innerprod{p}{f}$, therefore $\innerprod{p}{f}=0$.
    \end{proof}

    \begin{remark}
      Note that is a necessary condition to an arc be singular.
    \end{remark}

    Since it is a subspace of $p^\bot$, the dimension of the distribution
    \begin{equation*}
      \Delta = \spanby{f,\;\ad{f}{l}{g_i}\;|\; i=1,\dots,m;\; l=0,\dots,k-1} 
    \end{equation*}
    tells a lot about the complexity of the problem. Indeed, remember that $p\in\R{n+1}$ and that $\Delta \subset p^\bot$. Then, if $\dim{\Delta}=n+1$, one has $p\equiv0$, which contradicts \PMP{}. On the hand, if $\dim{\Delta}=n$, then $p^\bot$ was fully described and, then, the direction of $p$ is known. This essentially gives all the information needed to solve the optimal control problem. If $\dim{\Delta}<n$, then it is hard to solve problem due to missing information, represented by the unknown directions of $p^\bot$. In some sense, $n-\dim{\Delta}$ tells how much it is known about the problem.
    
    This analysis can also help to determinate when the matrix $B$ is invertible, which is a fundamental issue in the previous section. To this matter, consider the following definition.
    
    \begin{defin}
      Let $M$ be a matrix of the form:
      \begin{equation*}
        M=\left[\begin{array}{cccc}
          \innerprod{p}{v_{11}} & \innerprod{p}{v_{12}} & \cdots & \innerprod{p}{v_{1l}} \\
          \innerprod{p}{v_{21}} & \innerprod{p}{v_{22}} & \cdots & \innerprod{p}{v_{2l}} \\
          \vdots & \vdots & \ddots & \vdots \\
          \innerprod{p}{v_{l1}} & \innerprod{p}{v_{l2}} & \cdots & \innerprod{p}{v_{ll}}
        \end{array}\right].
      \end{equation*}
      
      If the constant entries of this matrix and the vector fields $v_{ij}$ that belongs to $\Delta$ (and therefore result in null entry in this matrix) ensure that $M$ is invertible, then the matrix $M$ is said to be \emph{$\Delta$-inverse-decidable}.
    \end{defin}

    This definition motivates the following theorem.
    
    \begin{teo}
      If a junction has order $q$ even, $\ad{f}{2q}{g_i}\in\Delta$ and $B$ is $\Delta$-inverse-decidable, then there is Fuller Phenomenon at this junction.
    \end{teo}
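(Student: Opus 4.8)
The plan is to show that the two structural hypotheses reduce the situation exactly to Corollary \ref{c:fuller_u_nao_analitico_por_partes alpha=0}, and then to upgrade the non-analyticity that corollary produces into a genuine accumulation of switchings.

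First I would check that $\ad{f}{2q}{g_i}\in\Delta$ forces $A\equiv 0$ on the singular arc. The entries of $A$ are exactly $\innerprod{p}{\ad{f}{2q}{g_i}}$, $i=1,\dots,m$. Along the singular arc every generator of $\Delta=\spanby{f,\;\ad{f}{l}{g_i}\;|\;i=1,\dots,m;\;l=0,\dots,2q-1}$ lies in $p^\bot$: the brackets $\ad{f}{l}{g_i}$ with $l\leq 2q-1$ pair to zero with $p$ because they are precisely the entries of the lower-order derivatives of $\phi$, which vanish on the singular interval, and $\innerprod{p}{f}=0$ by the lemma giving $\innerprod{p}{f}=0$ along a singular arc. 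Hence $\ad{f}{2q}{g_i}\in\Delta\subset p^\bot$ gives $\innerprod{p}{\ad{f}{2q}{g_i}}=0$ for each $i$, so $A\equiv 0$; in particular the singular control is $u_s=-B^{-1}A\equiv 0$, which satisfies the interior condition $\lVert u_i\rVert<K(t)$ since $K>0$.

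Next I would use that $B$ is $\Delta$-inverse-decidable to guarantee that $B$ is nonsingular throughout a neighborhood of the junction. By definition this invertibility is certified using only the constant entries of $B$ together with the entries forced to vanish because the corresponding bracket $\lieprod{g_j}{\ad{f}{2q-1}{g_i}}$ belongs to $\Delta\subset p^\bot$; this certificate does not see the particular value of $p$, so it persists along the whole singular arc and, by continuity, at the junction itself. With $A\equiv 0$, $B$ nonsingular and $q$ even, all the hypotheses of Corollary \ref{c:fuller_u_nao_analitico_por_partes alpha=0} hold, and it yields immediately that the junction is not analytic.

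It remains to pass from a non-analytic junction to the Fuller phenomenon, and this is the step I expect to be the main obstacle. The strategy is to use analyticity to show that the only way piecewise analyticity can fail at $t_c$ is through an accumulation of switchings. On the singular branch the control is identically zero, hence analytic; on the nonsingular branch the Maximum Principle gives $u=\sigma K$ with $\sigma=(\sign{\phi_1},\dots,\sign{\phi_m})$ and $K$ analytic. If the sign changes of the analytic functions $\phi_i$ did not accumulate at $t_c$, each $\phi_i$ would keep a fixed sign on a one-sided neighborhood of $t_c$, so $u$ would be analytic there; combined with the analytic singular branch this would make the junction analytic, contradicting the previous step. Therefore the switchings must accumulate at $t_c$, which is precisely the Fuller phenomenon. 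The delicate point is exactly this dichotomy between analytic junctions and accumulation of switchings, and it is here that the standing analyticity of $f$, of the $g_i$ and of $K$ is indispensable.
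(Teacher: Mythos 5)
Your proposal is correct and takes essentially the same route as the paper, which proves the theorem simply by observing that $\ad{f}{2q}{g_i}\in\Delta\subset p^\bot$ forces $A\equiv 0$ along the singular arc and that $\Delta$-inverse-decidability gives the required nonsingularity of $B$, so that Corollary \ref{c:fuller_u_nao_analitico_por_partes alpha=0} applies. The only difference is that you make explicit the final passage from a non-analytic junction to an actual accumulation of switchings (which the paper leaves implicit, tacitly identifying the two); your dichotomy argument there is the standard and correct one, modulo the small wording slip that the $\phi_i$ are analytic only on each bang subinterval rather than globally, which does not affect the conclusion.
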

    \begin{proof}
      Considering the previous discussion the proof is rather trivial. Just note that the hypothesis of the corollary \ref{c:fuller_u_nao_analitico_por_partes alpha=0} are verified.
    \end{proof}

  \section{Fuller Phenomenon in Hamiltonian systems}
    Many control problems have their origin at mechanical systems, usually obtained by the introduction of external forces. These forces are the controls and usually they have physical restrictions. Thus, if one wants the system to optimally perform some task, then it became an optimal control problem.

    The Fuller's example and its extension are of this kind of problem. It rises the question of how general is the Fuller Phenomenon in mechanical systems. In this section, it will be shown that there is a family of Hamiltonian systems that has accumulation of discontinuities. At the end, other families are also discussed.
    
    Consider the Hamiltonian function:
    \begin{equation*}
      \mathcal{H}=\frac{v^\intercal T v}{2} + Q(x) - x^\intercal Mu
    \end{equation*}
    where $(x,v)\in \R{n}\times\R{n}$ and $u \in \R{n}$. Suppose that, under this dynamic, starting the trajectory at $(x_0,v_0)\in\R{n}\times\R{n}$ and reaching the origin, the functional $c:x\in\R{n}\rightarrow\R{}$ needs to be minimized.
    
    \begin{align*}
      \mbox{minimize } & \int_{0}^{T_f(u)} c(x) \; dt \\
      \mbox{subject to } & \begin{cases}
	  \dot{x} = T v \\
	  \dot{v} = P(x) + Mu\\
	  u\in\mes{\R{n}}, \\
	  |u_i(t)|\leq 1,& \forall t\in [0,T_f(u)], \;i=1,\dots,n \\
	  (x,v)(0) = (x_0,v_0) \\ (x,v)(T_f(u)) = 0\\
	\end{cases}
    \end{align*}
    where $P=-\frac{\partial Q}{\partial x}$. It will be supposed that:
    \begin{enumerate}
      \item $T$ is symmetric positive definite;
      \item $M$ is symmetric and invertible;
      \item $Q$ and $c$ are $\mathcal{C}^\infty$ maps;
      \item $P$ vanishes at the origin;
      \item $c(x)=0 \Leftrightarrow x=0$;
      \item $\frac{\partial c}{\partial x}(0)=0$ and $\frac{\partial^2 c}{\partial x^2}$ is positive definite.
    \end{enumerate}

    Since the origin is an equilibrium, as soon as the trajectory reaches the origin all the controls vanish. Also, any trajectory that reaches the origin can be indefinitely extended with a singular arc, without increasing its cost.
    
    It will be shown that an optimal trajectory has a singular arc if and only if it is at the origin, which will imply that the control is discontinuous, and that the junction has order 2. Thus, by the theorem \ref{t:q+r impar} there will be Fuller Phenomenon at this junction.
    
    The techniques from the previous section will be applied to the vector fields that define the system:
    \begin{align*}
      f(x_0,x,v)&=\left[\begin{array}{c}
	      c(x) \\
	      Tv \\
	      P(x)
	    \end{array}\right] &
      g_i(x_0,x)&=\left[\begin{array}{c}
	      0 \\
	      0 \\
	      Me_i
	    \end{array}\right].
    \end{align*}
    
    The Hamiltonian from \PMP{} is:
    \begin{equation*}
      H(x_0,x,v,p_0,p_1,p_2,u) = -p_0c(x) + \innerprod{p_1}{Tv} + \innerprod{p_2}{P(x)} - \innerprod{p_2}{Mu}.
    \end{equation*}

    Then, evaluating the Lie brackets one gets:
    \begin{align*}
      \lieprod{f}{g_i} &=  -\left[ \begin{array}{ccc}
                            0 & \frac{\partial c}{\partial x} & 0 \\
                            0 & 0 & T \\
                            0 & \frac{\partial P}{\partial x} & 0
                          \end{array} \right] \left[ \begin{array}{c}
                            0 \\
                            0 \\
                            TMe_i
                          \end{array} \right] = \left[ \begin{array}{c}
                            0 \\
                            TMe_i \\
                            0
                          \end{array} \right] &
      \lieprod{g_j}{g_i} &= \left[ \begin{array}{c}
                            0 \\
                            0 \\
                            0
                          \end{array} \right].
    \end{align*}
    
    Note that just from these Lie brackets and the vector fields $f$ and $g_i$ all the information about the problem is known. Indeed, the distribuition $\{f,g_i,\ad{f}{}{g_i}\;|\;i=1,\dots,n\}$ has dimension greater than or equal $2n$ and it is a subset of $(p_0,p_1,p_2)^\bot$. If the dimension is $2n+1$, then the singular arc is not optimal, since $(p_0,p_1,p_2)$ would be null, which contradicts \PMP{}. 
    
    On the other hand, the dimension is $2n$ if and only if $f$ is linear combination of $g_i,\ad{f}{}{g_i}$, $i=1,\dots,n$, because these fields are always linearly independent. Then, since the first entry of these fields are zero, then the first entry of $f$ is also zero. Therefore $c$ is null on a singular interval, which implies that an arc is singular if and only if $x=0$. Thus, on a singular arc $0=\dot{x}=Tv$, thus, because $T$ is invertible, $v=0$. In the same way, $0=\dot{p}_2=Tp_1$, therefore, on a singular arc $p_1$ is also null and $p_0=1$.
    
    This analysis leads to a basis of $p^\bot$, therefore  we have
    \begin{equation*}
      \Delta=\spanby{f,g_i,\ad{f}{}{g_i}\;|\;i=1,\dots,n }.
    \end{equation*}
    
    The remaining Lie brackets, although simple, have long expression that do not add any information to the analysis, then it is enough to know that if $h(x_0,x,v)$ is a vector field that do not depends on $x_0$ then $\lieprod{f}{h}$ has the expression:
    \begin{equation*}
      \lieprod{f}{h} =\left[ \begin{array}{ccc}
			0 & * & * \\
			0 & * & * \\
			0 & * & *
		      \end{array} \right] \left[ \begin{array}{c}
			-c \\
			Tv \\
			P
		      \end{array} \right] - \left[ \begin{array}{ccc}
			0 & \frac{\partial c}{\partial x} & 0 \\
			0 & 0 & T \\
			0 & \frac{\partial P}{\partial x} & 0
		      \end{array} \right] \left[ \begin{array}{c}
			* \\
			* \\
			*
		      \end{array} \right] = \left[ \begin{array}{c}
			*Tv + *P - \frac{\partial c}{\partial x}* \\
			* \\
			* \\
		      \end{array} \right]
    \end{equation*}
    where the symbol ``*'' represents arbitrary matrices of convenient dimensions that depends on $(x,v)$.
    
    Since all the vector fields do not depend on $x_0$, this expression can be applied to then and, since along a singular arc $*Tv + *P - \frac{\partial c}{\partial x}* = 0$, then $\innerprod{p}{\ad{f}{l}{g_i}} \in \Delta$ on a singular arc. Moreover, from straight computation, $B=-MT\frac{\partial^2c}{\partial x^2}TM$, which is a nonsingular matrix.
    
    Therefore, the singular arc has order 2. note that $(-1)^2B$ is a negative definite matrix, which is in accordance with the GLC Condition.
    
    Finally, from corollary \ref{c:fuller_u_nao_analitico_por_partes alpha=0} one knows that the junction is not analytic.
    
    \begin{remark}
      Even if $T$ and $M$ depend on $x$, one still has a basis of $p^\bot$, as soon as these matrix would be invertible for all values of $x$. However, it could not be true that the problem order, and thus the junction order, is even and that the $GLCS$ holds.
    
      Nevertheless, the evaluation of the problem order is trivial and one cloud still find, at least partially, a basis of $p^\bot$, so the determination of the presence of the Fuller Phenomenon could be given by the fact of $B$ to be $\Delta$-inverse-decidable. 
    \end{remark}

    \begin{remark} \label{o:hamiltoniano totalmente singular}
      The hypothesis $c(0)=0$ is fundamental, otherwise, the dimension of $\Delta$ will be always $n+1$ and $p$ will be identically null, which would imply that a singular could not be not optimal.
      
      So, Hamiltonian control systems can not have a fully singular arc, i.e., $Mp_2=0$, if the functional to be optimized do not vanish at the point that need to be reached. A particular case are the time minimum problems, which can not have a singular arc since $c(x)\equiv 1$ and $\dim{\Delta}=n+1$. This particular fact was already known \cite{CHYBA-LEONARD-SONTAG:2003}, but the now one has a more general assertion with a simple geometric interpretation.
    \end{remark}
    
    \begin{remark} \label{o:gDelta}
      Sometimes it would be possible to conclude that a vector field is not orthogonal to $p$. This can also help in the analysis of the inversibility of $B$.
      
      For instance, if it is know that a nonsingular control $u_i$ is continuous, then it is also known that $\innerprod{p}{Me_i}\neq0$. In this case, if $h \in \Delta_i \smallsetminus \Delta$, with $\Delta_i=\spanby{Me_i,\Delta}$, then $\innerprod{p}{h}\neq 0$.
    \end{remark}

\bibliography{mybib}
\bibliographystyle{abnt-num}
\nocite{ISIDORI:1989}
\nocite{KUPKA:1986}
\nocite{NIJMEIJER:1990}
\nocite{POWERS:1980}
\nocite{ZABCZYK:1992}
\nocite{MACFARLANE:2000}
\nocite{BELL:1993}
\nocite{BORISOV:2000}
\nocite{ODIA-BELL:2012}
\nocite{MEESOMBOON-BELL:2002}

\end{document}